\renewcommand\section{\@startsection{section}{1}{\z@}%
                                   {3.5ex \@plus 1ex \@minus .2ex}%
                                   {2.3ex \@plus .2ex}%
                                   {\centering\normalfont\bfseries}}
\def\@biblabel#1{#1.}
\newtheorem{thm}{Theorem}
\newtheorem{lem}{Lemma}
\newtheorem{prop}{Proposition}
\newtheorem{rem}{Remark}
\newtheorem{dfn}{Definition}
\newcommand{\eps}{\varepsilon}
\renewcommand{\P}{\mathsf P}
\newcommand{\E}{\mathsf E}
\newcommand{\Ef}{\mathcal E}
\newcommand{\F}{\mathcal F}
\newcommand{\R}{\mathbb R}
\renewcommand{\Re}{\mathbb R}
\newcommand{\df}{\mathrm d}
\newcommand{\prt}{\partial}
\newcommand{\mc}[1]{\mathcal {#1}}
\newcommand{\dif}{{\mathrm D}}
\newcommand{\dom}{{\rm dom}(\dif)}
\newcommand{\be}{\begin{equation}}
\newcommand{\ee}{\end{equation}}
\let\leq\leqslant
\let\geq\geqslant
\begin{document}
\thispagestyle{plain} УДК 519.21
\setlength{\columnsep}{7mm}%
\setlength{\columnseprule}{.4pt}
\begin{multicols}{2}

\begin{flushleft}
Іваненко Д.О.$^{1}$

\vspace{0.1 cm}

{\bfseries Друга похідна логарифмічної функції вірогідності для
моделі заданої СДР керованим процесом Леві}
\end{flushleft}
\vspace{0.2 cm}

$^1$Київський національний університет імені Тараса Шевченка,
факультет радіофізики, електроніки та комп'ютерних систем, 01601,
Київ, вул.~Володимирська 64, \linebreak
e\nobreakdash-mail:
ida@univ.net.ua

\columnbreak

{\selectlanguage{english}
\begin{flushleft}

D.O. Ivanenko$^1$

 \vspace{0.1 cm}

 {\bfseries Second derivative of the log-likelihood in the
model given by a L\'evy driven SDE’S}
\end{flushleft}
 \vspace{0.4 cm}

$^1$Taras Shevchenko National University of Kyiv, Department of
Radio physics, electronics and computer systems, 01601, Kyiv,
Volodymyrska str., 64,\linebreak e\nobreakdash-mail:
ida@univ.net.ua }

\end{multicols}

{\itshape Методами числення Малявена отримано представлення для
другої похідної по параметру логарифмічної функції вірогідності
побудованої на дискретних спостереженнях процесу заданого лінійним
стохастичним диференціальним рівнянням, керованим процесом Леві.

Ключові слова: ММВ, функція вірогідності, СДР, регулярний
статистичний експеримент, ЛАН.}

{\itshape By means of the Malliavin calculus, integral
representation for the second derivative of the loglikelihood
function are given for a model based on discrete time observations
of the solution to equation $
    \df X_t=a_\theta(X_t)\df t +\df Z_t
$ with a  L\'evy process $Z$.

If we have a logarithm of transition kernel for Markov chain and
can calculate two its derivatives w.r.t. parameter, we can find
the maximum likelihood estimate (MLE) and its asymptotic normal
distribution.  But in our case the support of transition
probability density depend on parameter and we can't, in
principle, to obtain a precise formula for the logarithm of joint
density and its derivatives.

The likelihood function in our model is highly implicit. In this
paper, we develop an approach which makes it possible to control
the properties of the likelihood and log-likelihood functions only
in the terms  of the objects involved in the model: the function
$a_\theta(x)$, its derivatives,  and the L\'evy measure of the
L\'evy process $Z$.

 Key Words: MLE, Likelihood function, L\'evy driven
SDE, Regular statistical experiment, LAN.} \vfill

\vspace{0.4 cm}

Статтю представив д.ф.-м.н. Козаченко Ю.В.

\setlength{\columnsep}{4mm}%
\setlength{\columnseprule}{0pt}
\begin{multicols}{2}

\section*{Introduction}
Let  $Z$ be  a L\'evy process without a diffusion component; that
is,
$$
Z_t=ct+\int_0^t\int_{|u|>1}u\nu(\df s, \df
u)+\int_0^t\int_{|u|\leq 1}u\tilde\nu(\df s, \df u),
$$
where $\nu$ is a Poisson point measure with the intensity measure
$\df s \mu(\df u)$, and  $\tilde \nu (\df s, \df u)=\nu(\df s, \df
u)-\df s \mu(\df u)$ is respective compensated Poisson measure. In
the sequel, we assume the L\'evy measure $\mu$ to satisfy the
following:

 \begin{itemize}  \item[\textbf{H.} (i)] for some $\kappa>0$,
$$
\int_{|u|\geq 1}u^{2+\kappa}\mu(du)<\infty;
$$

\item[(ii)] for some $u_0>0$, the restriction of $\mu$ on $[-u_0,
u_0]$ has a positive density $$\sigma\in
C^2\left(\left[-u_0,0\right)\cup \left(0, u_0\right]\right);$$

\item[(iii)] there exists $C_0$ such that
\begin{multline*}
|\sigma'(u)|\leq C_0|u|^{-1}\sigma(u),\\ |\sigma''(u)|\leq
C_0u^{-2}\sigma(u),\ |u|\in (0, u_0];
\end{multline*}

\item[(iv)] $ \left(\log {1\over
\eps}\right)^{-1}\mu\Big(\{u:|u|\geq \eps\}\Big)\to \infty,\quad
\eps\to 0.
$
\end{itemize}
Consider stochastic equation of the form
\begin{equation}\label{eq1}
    \df X_t^\theta=a_\theta(X_t^\theta)\df t +\df Z_t,
\end{equation}
where $a:\Theta\times\R\to \R$ is a measurable function,
$\Theta\subset\R$ is a parametric set.

In \cite{MLE} it was proved that under conditions of smoothness
and growth of $a_\theta$ the Markov process $X$ given by
\eqref{eq1} has a transition probability density $p_t^\theta$
w.r.t. the Lebesgue measure. Besides, according to \cite{MLE} this
density has a derivative $\prt_\theta p_t^\theta(x,y)$. The
extension of the asymptotic methods of mathematical statistics is
used as a key tool the second derivative of the log-likelihood
ratio w.r.t. parameter. The purpose of this paper is to give a
Malliavin-type integral representation of this derivative.

\section{Main results}
We denote by $\P_x^\theta$ the distribution of this process in
$\mathbb{D}([0, \infty))$ with $X_0 =x$,  and  by $\E_x^\theta$
the expectation w.r.t. this distribution. Respective
finite-dimensional distribution for given time moments
$t_1<\dots<t_n$ is denoted by $\P_{x,\{t_k\}_{k=1}^n}^\theta$. On
the other hand, solution $X$ to Eq. (\ref{eq1}) is a random
function defined  on the same probability space $(\Omega, \F, \P)$
with the process $Z$, which depends additionally on the parameter
$\theta$ and the initial value $x=X(0)$. We do not indicate this
dependence in the notation, i.e. write $X_t$ instead of e.g.
$X^\theta_{x,t}$, but it will be important in the sequel that,
under certain conditions, $X_t$ is $L_2$-differentiable w.r.t.
$\theta$ and is $L_2$-continuous w.r.t $(t,x,\theta)$.

In the sequel we will show that, under appropriate conditions,
Markov process $X$ admits a transition probability density
$p^\theta_t(x,y)$ w.r.t. Lebesgue measure, which is continuous
w.r.t. $(t,x,y)\in (0,\infty)\times \Re\times \Re$. Then  (see
\cite{bridges}), for every $t>0, x,y\in \Re$ such that
\be\label{pnon} p^\theta_t(x,y)>0,\ee  there exists a weak limit
in $\mathbb{D}([0, t])$
$$
\P^{t, \theta}_{x,y}=\lim_{\eps\to
0}\P^\theta_x\Big(\cdot\Big||X_t-y|\leq \eps\Big),
$$
which can be interpreted naturally as a \emph{bridge} of the
process $X$ started at $x$ and conditioned to arrive to $y$ at
time $t$. We denote by $\E^{t, \theta}_{x,y}$ the expectation
w.r.t. $\P^{t, \theta}_{x,y}$.

In what follows, $C$ denotes a constant which is not specified
explicitly and may vary from place to place. By
$C^{k,m}(\Re\times\Theta), k,m\geq 0$ we denote the class of
functions $f:\Re\times\Theta\to \Re$ which has continuous
derivatives
$$
\frac{\prt^i}{\prt x^i}\frac{\prt^j}{\prt\ \theta^j}f, \quad i\leq
k, \quad j\leq m.
$$

In \cite{MLE} it was proved that under the conditions of following
Theorem $\prt_\theta p_t^\theta(x,y)$ has a Malliavin-type
integral representation \be\label{d_theta_rep} \prt_\theta
p_t^\theta(x,y)= g_t^\theta(x,y) p_t^\theta(x,y) \ee with
\be\label{g} g_t^\theta(x,y)=\begin{cases}\prt_\theta\log
p_t^\theta(x,y)=\E^{t,\theta}_{x,y}\Xi_t^1, &
p_t^\theta(x,y)>0,\\
0,&\hbox{otherwise}.\end{cases} \ee The goal of this section is to
obtain the same representation for second derivative, i.e.

\be\label{d_theta_rep1} \prt^2_{\theta\theta} p_t^\theta(x,y)=
G_t^\theta(x,y) p_t^\theta(x,y) \ee with \begin{multline}\label{G}
G_t^\theta(x,y)=\\ \begin{cases}\prt^2_{\theta\theta} \log
p_t^\theta(x,y)+g_t^\theta(x,y)^2= \\ =\E^{t,\theta}_{x,y}\Xi_t^2,
&
p_t^\theta(x,y)>0,\\
0,&\hbox{otherwise}.\end{cases} \end{multline} The functionals
$\Xi^{1}_{t}$ and $\Xi^{2}_{t}$, involved in expressions for $g$
and $G$, will be introduced explicitly in the proof below; see
formulas \eqref{Xi_1} and \eqref{Xi_2}.

\begin{thm}\label{mainthm2} Let $a\in C^{3,2} (\Re\times\Theta)$ have bounded derivatives
$\prt_xa$, $\prt^2_{xx}a$, $\prt^2_{x\theta}a$, $\prt^3_{xxx}a$,
$\prt^3_{x\theta\theta}a$, $\prt^3_{xx\theta}a$,
$\prt^4_{xxx\theta}a$ and for all $\ \theta\in \Theta, \ x\in \R$
\be\label{lin_gr1} |a_\theta(x)|+|\partial_{\theta}
a_\theta(x)|+|\partial^2_{\theta\theta} a_\theta(x)|\leq
C(1+|x|). \ee Then the transition probability density has a second
derivative $\prt^2_{\theta\theta} p_t^\theta(x,y),$ which is
continuous w.r.t. $(t,x,y, \theta)\in (0, \infty)\times \Re\times
\Re\times \Theta$, and \eqref{d_theta_rep1} holds true.
\end{thm}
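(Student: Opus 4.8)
The plan is to mimic the first-order argument behind \eqref{d_theta_rep}--\eqref{g} at one higher order of differentiation, manufacturing the weight $\Xi_t^2$ by a second application of the Malliavin integration-by-parts formula. The starting point is the identity $\E f(X_t)=\int f(y)p_t^\theta(x,y)\,\df y$ for smooth compactly supported $f$; it suffices to show that $\prt^2_{\theta\theta}\E f(X_t)=\E\big[f(X_t)\Xi_t^2\big]$ for a functional $\Xi_t^2$ with finite second moment, since disintegrating with respect to the law of $X_t$ then gives $\E\big[f(X_t)\Xi_t^2\big]=\int f(y)\,\E^{t,\theta}_{x,y}\Xi_t^2\,p_t^\theta(x,y)\,\df y$, and the arbitrariness of $f$ yields \eqref{d_theta_rep1} together with \eqref{G}.

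First I would establish second-order $L_p$-differentiability of $X_t$ in $\theta$. Differentiating \eqref{eq1}, the derivatives $Y_t=\prt_\theta X_t$ and $W_t=\prt^2_{\theta\theta}X_t$ solve the linear variational equations $\df Y_t=\prt_x a_\theta(X_t)Y_t\,\df t+\prt_\theta a_\theta(X_t)\,\df t$ and $\df W_t=\prt_x a_\theta(X_t)W_t\,\df t+\big(\prt^2_{xx}a_\theta(X_t)Y_t^2+2\prt^2_{x\theta}a_\theta(X_t)Y_t+\prt^2_{\theta\theta}a_\theta(X_t)\big)\df t$, both with zero initial data. The boundedness of $\prt_x a,\prt^2_{xx}a,\prt^2_{x\theta}a$ together with the linear-growth bound \eqref{lin_gr1} gives, via Gronwall and standard moment estimates, uniform-in-$\theta$ bounds $\E\sup_{s\le t}|Y_s|^p+\E\sup_{s\le t}|W_s|^p<\infty$ and the $L_p$-convergence of the difference quotients; this is exactly why the theorem requires the listed derivatives of $a$ to be bounded.

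Next I would invoke the Malliavin integration-by-parts (duality) formula for the jump flow already used in \cite{MLE}, which rests on the non-degeneracy of the Malliavin covariance of $X_t$; condition \textbf{H}, and in particular (iv), guarantees that this covariance is invertible with inverse moments of every order. Differentiating $\E f(X_t)$ once and integrating by parts gives $\prt_\theta\E f(X_t)=\E[f'(X_t)Y_t]=\E[f(X_t)\Xi_t^1]$, which is \eqref{g}; here $\Xi_t^1$ is the weight of \eqref{Xi_1}, built from $Y_t$, the Malliavin derivative $D X_t$, the inverse Malliavin covariance, and a divergence term. Differentiating once more and applying the product rule under the expectation (legitimate by the uniform $L_p$ bounds of the previous step) yields $\prt^2_{\theta\theta}\E f(X_t)=\E[f'(X_t)Y_t\Xi_t^1]+\E[f(X_t)\prt_\theta\Xi_t^1]$. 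A second integration by parts removes $f'$ from the first term, $\E[f'(X_t)Y_t\Xi_t^1]=\E[f(X_t)\Upsilon_t]$, and I would set $\Xi_t^2=\Upsilon_t+\prt_\theta\Xi_t^1$, which is the content of \eqref{Xi_2}.

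The hard part will be the integrability of $\Xi_t^2$. Already $\Xi_t^1$ contains the inverse Malliavin covariance — a negative-order object in the jump setting — and forming $\prt_\theta\Xi_t^1$ and the second divergence $\Upsilon_t$ compounds these inverse-covariance and divergence factors, so one must bound $\E|\Xi_t^2|^2<\infty$ by controlling the $\theta$-derivative of the inverse Malliavin matrix and the higher Malliavin--Sobolev norms of $X_t$, $Y_t$, $W_t$. This is precisely where the full strength of \textbf{H} and the boundedness of the third- and fourth-order mixed derivatives $\prt^3_{xxx}a,\prt^3_{x\theta\theta}a,\prt^3_{xx\theta}a,\prt^4_{xxx\theta}a$ enter. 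Once these moment bounds are secured, the differentiation under the integral sign and both integrations by parts are justified, the disintegration identity above produces \eqref{d_theta_rep1} and \eqref{G}, and the asserted continuity of $\prt^2_{\theta\theta}p_t^\theta$ in $(t,x,y,\theta)$ follows from the $L_2$-continuity of $X_t,Y_t,W_t$ in $(t,x,\theta)$, the continuity of the weight, and the continuity of the bridge expectation $\E^{t,\theta}_{x,y}\Xi_t^2$ in $y$.
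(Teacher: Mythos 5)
Your overall strategy---second-order integration by parts in the jump calculus of \cite{MLE}, variational equations for $Y_t=\prt_\theta X_t^\theta$ and $W_t=\prt^2_{\theta\theta}X_t^\theta$ (your equations agree with \eqref{d2X}), moment and negative-moment bounds, and disintegration over bridges---is the paper's strategy. But the logical backbone announced in your first paragraph has a genuine gap: you claim it \emph{suffices} to prove the weak identity $\prt^2_{\theta\theta}\E_x^\theta f(X_t^\theta)=\E_x^\theta\bigl[f(X_t^\theta)\Xi_t^2\bigr]$ and then obtain \eqref{d_theta_rep1} ``by arbitrariness of $f$''. To pass from $\prt^2_{\theta\theta}\int f(y)\,p_t^\theta(x,y)\,\df y=\int f(y)\,\E^{t,\theta}_{x,y}\Xi_t^2\,p_t^\theta(x,y)\,\df y$ to the pointwise statement of the theorem, you must identify the left-hand side with $\int f(y)\,\prt^2_{\theta\theta}p_t^\theta(x,y)\,\df y$, i.e.\ differentiate under the $\df y$-integral---and that presupposes the pointwise existence (and local regularity) of $\prt^2_{\theta\theta}p_t^\theta$, which is precisely what is to be proved. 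Your proposal never establishes this existence independently; the continuity you invoke at the end concerns the candidate right-hand side, not a derivative already known to exist. The gap can be closed either by integrating the weak identity back up in $\theta$ (fundamental theorem of calculus plus Fubini, using continuity of $(y,\theta)\mapsto\E^{t,\theta}_{x,y}\Xi_t^2\,p_t^\theta(x,y)$), or, as the paper does, by first proving existence and continuity of $\prt^2_{\theta\theta}p_t^\theta$ directly: the paper differentiates the \emph{pointwise} first-order identity \eqref{d_theta_rep}, and the technical heart of that step is the $L_2$-differentiability of $\Xi_t^1$ in $\theta$, obtained by regularizing the denominator ($\dif X_t^\theta\mapsto\dif X_t^\theta+\eps$), applying the chain rule, and passing to the limit via the negative-order moment bounds for $\dif X_t^\theta$. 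Only after that does the paper use the test-function identity \eqref{rightside} to identify the derivative.

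A second, less serious discrepancy: your weight is not the one in \eqref{Xi_2}. The paper never differentiates $\Xi_t^1$ in $\theta$ to build the representation; it writes $\prt^2_{\theta\theta}\E_x^\theta f(X_t^\theta)=\E_x^\theta\bigl[f''(X_t^\theta)(\prt_\theta X_t^\theta)^2+f'(X_t^\theta)\,\prt^2_{\theta\theta}X_t^\theta\bigr]$ and integrates by parts twice, producing the iterated divergence $\Xi_t^2=\delta\bigl((\dif X_t^\theta)^{-1}\bigl(\delta\bigl((\prt_\theta X_t^\theta)^2/\dif X_t^\theta\bigr)+\prt^2_{\theta\theta}X_t^\theta\bigr)\bigr)$, whereas your weight $\Upsilon_t+\prt_\theta\Xi_t^1$, with $\Upsilon_t=\delta\bigl(Y_t\Xi_t^1/\dif X_t^\theta\bigr)$, comes from differentiating the already-integrated-by-parts first-order identity. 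The two weights have the same bridge expectation (both equal $\prt^2_{\theta\theta}\log p_t^\theta+(g_t^\theta)^2$ on the set $p_t^\theta>0$), so your route would yield the same function $G_t^\theta$; but it forces the hardest object, $\prt_\theta\Xi_t^1$, into the representation itself, whereas the paper needs it only for the existence-and-continuity step.
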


\begin{rem} By statement of Theorem, the logarithm of  the transition
probability density has a second continuous derivative w.r.t.
$\theta$ on the open subset of $(0,\infty)\times \Re\times
\Re\times \Theta$ defined by inequality $ p^\theta_t(x,y)>0$ and,
on this subset, admits the integral representation \be\label{log}
\prt^2_{\theta\theta} \log
p_t^\theta(x,y)=\E^{t,\theta}_{x,y}\Xi_t^2-\left(E^{t,\theta}_{x,y}\Xi_t^1\right)^2.
\ee
\end{rem}

\begin{rem}\label{dg_momres} For every $\gamma<1+\kappa/2$ there
exists constant $C$ which depends on $t$ and $\gamma$ only, such
that \be\label{dg_mom} \E_x^\theta \Big|\prt_\theta g_t^\theta(x,
X_t^\theta)\Big|^\gamma\leq C(1+|x|)^\gamma. \ee
\end{rem}

\section{Proof of Theorem \ref{mainthm2}}

We need to repeat some notations and statements defined in Section
3 \cite{MLE}. Fix $u_1\in (0, u_0)$, where $u_0$ comes from
\textbf{H} (ii), and introduce a $C^2$-function $\varrho:\Re\to
\Re^+$ with bounded derivative, such that $$
\varrho(u)=\begin{cases} u^{2},&|u|\leq u_1;\\
0,&|u|\geq u_0\end{cases}.
$$  Denote by $Q_c(x),\ c\in \Re$  the value at the time
moment $s=c$ of the solution to  Cauchy problem
$$
q'(s)=\varrho(q(s)), \quad q(0)=x.
$$
Then $\{Q_c, c\in \Re\}$ is a group of transformations of $\Re$,
and $\prt_c Q_c(x)|_{c=0}=\varrho(x)$.

\begin{dfn}\label{def1} A functional  $F\in L_2(\Omega, \F, \P)$
is called \emph{stochastically differentiable}, if there exists an
$L_2(\Omega, \F, \P)$-limit \begin{equation}\label{dif} \hat\dif
F=\lim_{c\to 0}{1\over c}\Big(\mc{Q}_cF-F\Big).
\end{equation}
The closure $\dif$ of the operator $\hat \dif$ defined by
(\ref{dif}) is called the \emph{stochastic derivative}.  The
adjoint operator $\delta=\dif^*$ is called the \emph{divergence
operator} or the \emph{extended stochastic integral}.
\end{dfn}

\begin{rem}\label{remark01} $\dom$ is dense in
$L_2(\Omega, \F, \P)$, hence $\delta$ is well defined. In
addition, $\mathrm{dom}(\delta)$ is dense in $L_2(\Omega, \F,
\P)$, hence $\hat \dif$ is closable.
 The operator $\delta$ itself is closed as an adjoint one; e.g. Theorem VIII.1
 in \cite{gorod}.
\end{rem}

Denote $\chi(u)=-{(\sigma(u)\varrho(u))'\over \sigma(u)}$,
$u\not=0$.

\begin{prop}\label{lem01} 1. Let $\varphi\in C^1(\Re^d, \Re)$ have bounded
derivatives and  $F_k\in\dom$, $k=\overline{1,d}$. Then
 $\varphi(F_1,\dots,F_d)\in\dom$ and \be\label{chain}\dif\left[
\varphi(F_1,\dots,F_d)\right]=\sum\limits_{k=1}^d[\partial_{x_k}\varphi](F_1,\dots,F_d)\dif
F_k.\ee

2. The constant function $1$ belongs to $\mathrm{dom}(\delta)$ and
\be\label{delta_1} \delta(1)=\int_0^T\int_{\Re}\chi(u)\tilde
\nu(\df s,\df u). \ee

3. Let  $G\in\dom$ and
\begin{equation}\label{2}
    \E\left(\delta(1)G\right)^2<\infty.
\end{equation}

Then $G\in\mathrm{dom}(\delta)$ and $\delta(G)=\delta(1)G-\dif G.
$
\end{prop}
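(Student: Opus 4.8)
The plan is to treat the three parts in order, exploiting throughout that $\mc{Q}_c$ is a \emph{substitution} operator: since $\mc{Q}_c F$ is obtained by replacing every jump amplitude $u$ of the underlying Poisson measure by $Q_c(u)$, one has $\mc{Q}_c[\psi(F_1,\dots,F_d)]=\psi(\mc{Q}_c F_1,\dots,\mc{Q}_c F_d)$ for any $\psi$, and in particular $\mc{Q}_c$ is multiplicative. For part 1, I would write, by the mean-value form of Taylor's formula,
\be
\tfrac1c\big(\varphi(\mc{Q}_c F_1,\dots,\mc{Q}_c F_d)-\varphi(F_1,\dots,F_d)\big)=\sum_{k=1}^d R_k^c\cdot\tfrac1c(\mc{Q}_c F_k-F_k),
\ee
with $R_k^c=\int_0^1[\prt_{x_k}\varphi]\big(F+\lambda(\mc{Q}_c F-F)\big)\,\df\lambda$. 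As $c\to0$ one has $\mc{Q}_c F_k\to F_k$ and $\tfrac1c(\mc{Q}_c F_k-F_k)\to\dif F_k$ in $L_2$, while $R_k^c\to[\prt_{x_k}\varphi](F_1,\dots,F_d)$ a.s.; boundedness of $\nabla\varphi$ then yields $L_2$-convergence of each product by dominated convergence, giving \eqref{chain} and $\varphi(F_1,\dots,F_d)\in\dom$.

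For part 2, I would build the integration-by-parts identity $\E[\dif F]=\E[F\,\delta(1)]$ from a Girsanov-type change of variables. The push-forward of $\P$ under the amplitude flow $u\mapsto Q_c(u)$ is the law of a Poisson measure with intensity $\df s\,(Q_c)_*\mu(\df u)$, equivalent to $\P$ with density $R_c$, so that $\E[\mc{Q}_c F]=\E[F R_c]$. Differentiating the classical density formula for equivalent Poisson intensities at $c=0$ (where $R_0=1$) and using $\prt_c Q_c|_{c=0}=\varrho$, one finds $\prt_c|_{c=0}\log\tfrac{d(Q_c)_*\mu}{d\mu}(u)=-\tfrac{(\sigma(u)\varrho(u))'}{\sigma(u)}=\chi(u)$, whence
\be
\prt_c\big|_{c=0}R_c=\int_0^T\!\!\int_{\Re}\chi(u)\,\nu(\df s,\df u)-\int_0^T\!\!\int_{\Re}\chi(u)\,\df s\,\mu(\df u)=\int_0^T\!\!\int_{\Re}\chi(u)\,\tilde\nu(\df s,\df u).
\ee
Passing to the limit in $\tfrac1c\E[F(R_c-1)]$ identifies $\delta(1)$ with this compensated integral; it lies in $L_2$ because $\chi$ is supported in $[-u_0,u_0]$ and $|\chi(u)|\le C|u|$ there (from $\varrho(u)=u^2$ and $|\sigma'/\sigma|\le C_0|u|^{-1}$), so that $\int\chi^2\,\df\mu<\infty$.

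For part 3, I would first record the Leibniz rule $\dif(FG)=F\dif G+G\dif F$, which follows from the multiplicativity of $\mc{Q}_c$ by splitting $\tfrac1c(\mc{Q}_c F\,\mc{Q}_c G-FG)=\mc{Q}_c F\cdot\tfrac1c(\mc{Q}_c G-G)+G\cdot\tfrac1c(\mc{Q}_c F-F)$ and passing to the $L_2$-limit. Then, for any $F\in\dom$, part 2 applied to $FG$ gives $\E[\dif(FG)]=\E[FG\,\delta(1)]$, so
\be
\E[(\dif F)G]=\E[\dif(FG)]-\E[F\dif G]=\E\big[F(\delta(1)G-\dif G)\big].
\ee
The hypothesis $\E(\delta(1)G)^2<\infty$ guarantees $\delta(1)G-\dif G\in L_2$, so the functional $F\mapsto\E[(\dif F)G]$ is bounded on $\dom$ and represented by $\delta(1)G-\dif G$; by definition of the adjoint this gives $G\in\mathrm{dom}(\delta)$ and $\delta(G)=\delta(1)G-\dif G$.

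I expect the main obstacle to be part 2: rigorously justifying the differentiation under the expectation in $\tfrac1c\E[F(R_c-1)]\to\E[F\,\prt_c|_{c=0}R_c]$ together with the $L_2$-convergence of $\tfrac1c(R_c-1)$, which requires uniform-in-$c$ control of the Girsanov densities near $c=0$; here the structural bounds \textbf{H}(iii) on $\sigma',\sigma''$ and the support properties of $\varrho$ do the work. A secondary technical point in part 3 is extending the Leibniz rule and the application of part 2 from a convenient dense subclass of bounded functionals to general $F,G\in\dom$, which is again controlled by the integrability assumption.
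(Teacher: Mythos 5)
The paper itself contains no proof of this Proposition --- it defers entirely to the reference \cite{MLE} --- and your proposal reconstructs precisely the argument used in that source: the chain rule via a mean-value expansion together with the substitution property of $\mc{Q}_c$, the identification of $\delta(1)=\int_0^T\int_{\Re}\chi(u)\tilde\nu(\df s,\df u)$ by differentiating at $c=0$ the Girsanov-type density of the Poisson measure with amplitude-perturbed intensity $\df s\,(Q_c)_*\mu(\df u)$, and part~3 via the Leibniz rule plus the duality $\E[(\dif F)G]=\E[F(\delta(1)G-\dif G)]$, extended by closability from a core of bounded functionals. Your computation $\prt_c|_{c=0}\,\rho_c(u)=-(\sigma(u)\varrho(u))'/\sigma(u)=\chi(u)$, the resulting compensated-integral form of $\delta(1)$, and the $L_2$-bound $\int\chi^2\,\df\mu<\infty$ obtained from \textbf{H}(ii)--(iii) and the support of $\varrho$ are all correct, so the proposal is sound and takes essentially the same approach as the paper's source.
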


The proofs of this Proposition and Remark \ref{remark01} can be
found in \cite{MLE}.

\begin{lem}\label{D^kX} Under the conditions of Theorem \ref{mainthm2}
$X_t^\theta$ is thrice stochastically differentiable and
\begin{multline}\label{DX}
\dif^jX_t^\theta=\sum_{i=0}^{j-1}\frac{(i+1)^{j-i+1}}{i!}
\int_0^t\dif^{j-i-1}\left(\Ef_t\Ef_s^{-1}\right)\\
\int_{\R}\varrho(u)\left(\varrho(u)^{i}\right)^{(i)}\nu(\df s, \df
u), \ j=\overline{1,3};\end{multline} where
$\Ef_t:=\exp\left\{\int_0^t\partial_xa_\theta( X_\tau^\theta)\df
\tau\right\}$,
\begin{multline}\label{DEf}\dif^n\Ef_t=\sum_{k=0}^{n-1}\sum_{j=0}^{n-k-1}C_{n-1}^kC_{n-k-1}^j
\dif^k
\Ef_t\times\\\int_0^t\dif^j\left(\prt_{xx}^2a_\theta(X_\tau^\theta)\right)
\dif^{n-k-j}X_\tau^\theta\df\tau, \ n=1,2.\end{multline}
\end{lem}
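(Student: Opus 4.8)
The plan is to prove Lemma~\ref{D^kX} by induction on the order of differentiation $j$, establishing the two formulas \eqref{DX} and \eqref{DEf} simultaneously. The starting point is the definition of the stochastic derivative $\dif$ as the $L_2$-limit of $c^{-1}(\mc{Q}_cF-F)$, together with the chain rule \eqref{chain} of Proposition~\ref{lem01}, part~1, which lets me differentiate any smooth function of elements of $\dom$. First I would verify the base case $j=1$: apply the operator $\dif$ to the integral equation for $X_t^\theta$ obtained from \eqref{eq1}, namely $X_t^\theta=x+\int_0^t a_\theta(X_\tau^\theta)\df\tau+Z_t$. The action of $\mc{Q}_c$ on the driving noise $Z$ produces, in the limit, the jump-transformation term $\int_{\R}\varrho(u)\,\nu(\df s,\df u)$, while the chain rule applied to the drift yields $\int_0^t \prt_x a_\theta(X_\tau^\theta)\,\dif X_\tau^\theta\,\df\tau$. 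Solving this linear variational equation by the variation-of-constants formula introduces the factor $\Ef_t\Ef_s^{-1}$, which is exactly the exponential of the integrated $\prt_x a_\theta$; this gives \eqref{DX} for $j=1$ and simultaneously \eqref{DEf} for $n=1$ upon differentiating the definition $\Ef_t=\exp\{\int_0^t\prt_x a_\theta(X_\tau^\theta)\df\tau\}$ via the chain rule.

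For the inductive step I would assume \eqref{DX} holds up to order $j-1$ and \eqref{DEf} up to the corresponding $n$, then apply $\dif$ once more. The key structural point is that each application of $\dif$ commutes with the Lebesgue time-integral $\int_0^t(\cdot)\df\tau$ but acts nontrivially through the chain rule on the composite terms $\prt_x a_\theta(X_\tau^\theta)$, $\prt^2_{xx}a_\theta(X_\tau^\theta)$, and on the already-differentiated factors $\dif^k\Ef_t$ and $\dif^{n-k-j}X_\tau^\theta$. This is precisely where the Leibniz-type binomial coefficients $C_{n-1}^kC_{n-k-1}^j$ in \eqref{DEf} and the combinatorial weights $(i+1)^{j-i+1}/i!$ in \eqref{DX} are generated: differentiating a product of $\dif$-powers distributes the new derivative across all factors, and collecting like terms produces the stated coefficients. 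The jump term transforms analogously, with repeated differentiation of $\varrho(u)^{i}$ accounting for the factor $(\varrho(u)^{i})^{(i)}$.

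The main obstacle I anticipate is not the formal bookkeeping of coefficients but the analytic justification that each $\dif^j X_t^\theta$ genuinely lies in $\dom$, i.e. that the relevant $L_2$-limits exist up to order three. This requires moment bounds on the iterated jump integrals and on the exponential factor $\Ef_t\Ef_s^{-1}$, which is where the hypotheses of Theorem~\ref{mainthm2} enter decisively: the boundedness of $\prt_x a$ controls $\Ef_t$ and its inverse, the boundedness of the higher mixed derivatives $\prt^2_{xx}a$, $\prt^3_{xxx}a$ controls the iterated chain-rule terms, and condition~\textbf{H}~(i) guarantees the finiteness of the $(2+\kappa)$-moments needed for the Poisson integrals. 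I would close each induction step by checking, via Burkholder-type inequalities for the Poisson integrals together with Gronwall's lemma applied to the linear variational equations, that the candidate expression is indeed the $L_2$-limit defining $\dif^j X_t^\theta$, so that the formulas are not merely formal but hold in $\dom$. The cut-off function $\varrho$, which vanishes for $|u|\geq u_0$, is what makes these moment estimates tractable by restricting the noise transformation to the region where $\sigma$ is smooth.
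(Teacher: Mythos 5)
Your proposal follows essentially the same route as the paper's own proof of Lemma~\ref{D^kX}: the paper simply cites \cite{MLE} for the cases $j=1,2$, $n=1$ (where exactly your argument is carried out --- apply $\dif$ to the integral equation for $X_t^\theta$, use the chain rule on the drift and the jump-size transformation on the noise, and solve the resulting linear variational equation by variation of constants to produce $\Ef_t\Ef_s^{-1}$) and then extends to $j=3$ by induction with the same argument, resting the $L_2$-justification on Theorem~II.2.8.5 of \cite{scorohod2}, which packages precisely the Burkholder/Gronwall moment estimates you invoke. The only caveat, which applies equally to the paper's own terse proof, is that neither you nor the paper actually carries out the Leibniz-type bookkeeping that is claimed to yield the specific coefficients $(i+1)^{j-i+1}/i!$ and $C_{n-1}^kC_{n-k-1}^j$ in \eqref{DX} and \eqref{DEf}.
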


\begin{rem} The expressions for $\dif^n\left(\prt^2_{xx}
a_\theta(X_t^\theta)\right)$ and $\dif^n\left(\Ef_t
\Ef_s^{-1}\right)$ can be found by the first statement of
Proposition \ref{lem01} (and formula \eqref{DEf} respectively).
\end{rem}

\begin{rem} Under additional conditions about smoothness and growth of
$a_\theta$ the formulas \eqref{DX} and \eqref{DEf} are equitable
if $j$ is more than 3 and $n$ is more than 2.
\end{rem}

The case $j=1,2$ and $n=1$ was considered in \cite{MLE}. The proof
of \eqref{DX} as $j\geq3$ provides by induction using the argument
of proof of relation (27) \cite{MLE}, and based on Theorem
II.2.8.5 \cite{scorohod2}. The same arguments that in Section 3.2
\cite{MLE} give (see details in proof of relations (27), (31) and
(32) \cite{MLE}):
\begin{multline}\label{Dpr1X}
\dif^2\partial_{\theta\theta}X_t^\theta=\int_0^t\dif^2
\left(\Ef_t\Ef_s^{-1}\right)\partial_{\theta}a_\theta(X_s^\theta)\df
s+\\2\int_0^t\dif \left(\Ef_t\Ef_s^{-1}\right)\partial^2_{x\theta}
a_\theta(X_s^\theta)\dif X_s^\theta\df s+\\
\Ef_t\int_0^t\Ef_s^{-1}\left(\partial^3_{xx\theta}
a_\theta(X_s^\theta)(\dif
X_s^\theta)^2+\right.\\\left.\partial^2_{x\theta}
a_\theta(X_s^\theta)\dif^2 X_s^\theta\right)\df s,
\end{multline}
\begin{multline}\label{d2X}
\partial^2_{\theta\theta}X_t^\theta=\Ef_t\int_0^t\Ef_s^{-1}
\left([\partial^2_{\theta\theta}a_\theta](X_s^\theta)+\right.\\\left.
2[\partial^2_{x\theta}a_\theta](X_s^\theta)\partial_\theta
X_s^\theta+[\partial^2_{xx}a_\theta](X_s^\theta)(\partial_\theta
X_s^\theta)^2\right)\df s, \end{multline}
\begin{multline}\label{DprX}
\dif\partial^2_{\theta\theta}X_t^\theta=2\Ef_t\int_0^t\Ef_s^{-1}
\left(\partial^2_{xx}a_\theta(X_s^\theta)\partial_\theta
X_s^\theta+\right.\\\left.[\partial^2_{x\theta}a_\theta](X_s^\theta)\right)
\dif\prt_\theta X_s^\theta\df s+
\\
\Ef_t\int_0^t\Ef_s^{-1}\left(2\partial^3_{xx\theta}[a_\theta](X_s^\theta)\partial_\theta
X_s^\theta+\right.\\\left.\partial^3_{xxx}a_\theta(X_s^\theta)(\partial_\theta
X_s^\theta)^2+[\partial^3_{x\theta\theta}a_\theta](X_s^\theta)\right)
\dif X_s^\theta\df s.
\end{multline}

Similarly to proof the moment bounds for $\prt_\theta X_t^\theta$,
 $\dif(\prt_\theta X_t)$,
 $\dif X_t^\theta$, $\dif^2 X_t^\theta$, proved in Section
3.3 \cite{MLE}, we get the same one for $\prt^2_{\theta\theta}
X_t^\theta$, $\dif(\prt^2_{\theta\theta} X_t^\theta)$,
$\dif^2(\prt_\theta X_t^\theta)$ and $\dif^3 X_t^\theta$. Note
that the assumption on the derivatives $\prt_xa, \prt_{xx}^2a,
\prt^3_{xxx}a$ is used in Section 3.2 \cite{MLE} to get the
existence of the derivatives $\dif X_t, \dif^2 X_t, \dif^3 X_t$.
The addition assumption on $\prt^2_{x\theta}a_\theta$ similarly
gives the existence of derivative $\dif\left(\prt_\theta
X_t^\theta\right)$.

\begin{proof}[Proof of Theorem \ref{mainthm2}] In the theorem 1
\cite{MLE} it was proved that the transition
probability density has a derivative  $\prt_\theta
p_t^\theta(x,y),$ which is continuous  w.r.t. $(t,x,y, \theta)\in
(0, \infty)\times \Re\times \Re\times \Theta$, and functional
$\Xi_t^1$, from its representation given by the formula

\be\label{Xi_1} \Xi_t^1={(\prt_\theta X_t^\theta) \delta(1)\over
\dif X_t^\theta}+{(\prt_\theta X_t^\theta) \dif^2 X_t^\theta \over
(\dif X_t^\theta)^2}-{\dif(\prt_\theta X_t^\theta)\over \dif
X_t^\theta}. \ee Note that  $X_t$ is twice $L_2$-differentiable
w.r.t. parameter $\theta$, see \eqref{d2X} for its derivative. In
addition, $\dif X_t^\theta$, $\dif^2 X_t^\theta$, and  $\dif
\prt_\theta X_t^\theta$, are $L_2$-differentiable w.r.t. $\theta$,
and all these derivatives satisfy moment bounds similar to (35)
\cite{MLE} (moment bounds for $\dif X_t^\theta$). Now it is easy
to prove that $\Xi_t^1$ is $L_2$-differentiable w.r.t. $\theta$
(the explicit formula of the derivative is omitted). One can just
replace $\dif X_t$ in the denominator in the formula \eqref{Xi_1}
by $\dif X_t+\eps$, prove that this new functional is
$L_2$-differentiable w.r.t. $\theta$ using the chain rule, and
then show using (36) \cite{MLE} (negative order moment bounds for
$\dif X_t^\theta$) that both this functional and its derivative
w.r.t. $\theta$ converge (locally uniformly) in $L_2$ as $\eps\to
0$, respectively,  to $\Xi_t^1$ and to the functional
$\prt_\theta\Xi_t^1$ which comes from the formal differentiation
of \eqref{Xi_1}. This argument also shows that $\Xi_t^1$ and
$\prt_\theta\Xi_t^1$ depend continuously (in $L_2$) on $x, t,
\theta$. Therefore, we can take a derivative at the right hand
side in \eqref{d_theta_rep}, which gives
$$
\prt^2_{\theta\theta}p_t^\theta(x,y)=
p_t^\theta(x,y)\E^{t,\theta}_{x,y}\prt_\theta\Xi_t^1+p_t^\theta(x,y)g_t^\theta(x,y)^2.
$$
This function is continuous w.r.t. $(t,x, y,\theta)$ because
$p_t^\theta$, $g_t^\theta$, and $\prt_\theta\Xi_t^1$ depend
continuously (in $L_2$) on $x, t, \theta$, and relation
\be\label{null} \P_x^\theta(X_t=y)=0, \quad x, y\in \Re, \quad
t>0, \quad \theta\in \Theta \ee holds true (by representation
\eqref{d_theta_rep}).

To prove \eqref{d_theta_rep1}, we use moment bounds for
$\prt_\theta X_t^\theta$, $\prt^2_{\theta\theta} X_t^\theta$,
$\dif(\prt_\theta X_t)$, $\dif(\prt^2_{\theta\theta} X_t^\theta)$,
$\dif^2(\prt_\theta X_t^\theta)$, $\dif X_t^\theta$, $\dif^2
X_t^\theta$ and $\dif^3 X_t^\theta$ to get, similarly to the proof
of (37) \cite{MLE} (integral representation for $p_t^\theta$),
that
$$\frac{(\prt_\theta X_t^\theta)^2}{\dif X_t^\theta},\quad
\frac{1}{\dif X_t^\theta}\left(\delta\left(\frac{(\prt_\theta
X_t^\theta)^2}{\dif X_t^\theta}\right)+\partial^2_{\theta\theta}
X_t^\theta\right)$$ belong to $\mathrm{dom}(\delta)$ and

\begin{multline}\label{Xi_2} \Xi_t^2:=\delta\left(\frac{1}{\dif
X_t^\theta}\left(\delta\left(\frac{(\prt_\theta
X_t^\theta)^2}{\dif X_t^\theta}\right)+\partial^2_{\theta\theta}
X_t^\theta\right)\right)=\\-{1 \over \dif
X_t^\theta}\dif\delta\left({(\prt_\theta X_t^\theta)^2 \over \dif
X_t^\theta}\right)+{\dif\prt^2_{\theta\theta}X_t^\theta\over\dif
X_t^\theta}+
\\ \left({\delta(1)\over \dif X_t^\theta}+{\dif^2
X_t^\theta \over (\dif
X_t^\theta)^2}\right)\left(\delta\left(\frac{(\prt_\theta
X_t^\theta)^2}{\dif X_t^\theta}\right)+\partial^2_{\theta\theta}
X_t^\theta\right),
\end{multline}
with
\begin{multline*}
\delta\left(\frac{(\prt_\theta X_t^\theta)^2}{\dif
X_t^\theta}\right)=\\\frac{(\prt_\theta X_t^\theta)^2 \delta(1)}{
\dif X_t^\theta}+\frac{(\prt_\theta X_t^\theta)^2 \dif^2
X_t^\theta}{(\dif X_t^\theta)^2} -\frac{2(\prt_\theta
X_t^\theta)\dif(\prt_\theta
X_t^\theta)}{\dif X_t^\theta},
\end{multline*}
\begin{multline*}
\dif\delta\left(\frac{(\prt_\theta X_t^\theta)^2}{\dif
X_t^\theta}\right)= \\{2\prt_\theta X_t^\theta\over\dif
X_t^\theta}\left(\delta(1)\dif(\prt_\theta
X_t^\theta)-\dif^2(\prt_\theta X_t^\theta)\right)+{(\prt_\theta
X_t^\theta)^2\dif\delta(1)\over\dif
X_t^\theta}\\-{2(\dif(\prt_\theta X_t^\theta))^2\over\dif
X_t^\theta}+ \left({\prt_\theta X_t^\theta\over\dif
X_t^\theta}\right)^2 \left(\dif^3 X_t^\theta-\delta(1)\dif^2
X_t^\theta\right)\\+{4\prt_\theta X_t^\theta\dif(\prt_\theta
X_t^\theta)\dif^2 X_t^\theta\over(\dif
X_t^\theta)^2}-{2(\prt_\theta X_t^\theta\dif^2
X_t^\theta)^2\over(\dif X_t^\theta)^3}.
\end{multline*}
The expressions for $\prt_\theta X_t^\theta$, $\dif\prt_\theta
X_t^\theta$ and $\dif\delta(1)$ can be found in \cite{MLE}, the
other one given by the formulas \eqref{DX} -- \eqref{DprX}.
Therefore, for any test function $f\in C^2(\Re)$ with bounded
derivatives we have
\begin{multline}\label{rightside}
    \partial^2_{\theta\theta}\E_x^\theta f(X_t^\theta)=\\
    \E_x^\theta \left(f''(X_t^\theta)(\prt_\theta X_t^\theta)^2+f'(X_t^\theta)
    \partial^2_{\theta\theta} X_t^\theta\right)=\\
    \E_x^\theta \left(\dif f'(X_t^\theta){(\prt_\theta X_t^\theta)^2\over \dif X_t^\theta}
    +f'(X_t^\theta)\partial^2_{\theta\theta} X_t^\theta\right)=\\
    \E_x^\theta \left(f'(X_t^\theta)\left(\delta\left({(\prt_\theta X_t^\theta)^2\over \dif
    X_t^\theta}\right)
    +\partial^2_{\theta\theta} X_t^\theta\right)\right)=\\
    \E_x^\theta \left(\frac{\dif f(X_t^\theta)}{\dif X_t^\theta}
    \left(\delta\left(\frac{(\prt_\theta X_t^\theta)^2}{\dif
    X_t^\theta}\right)+\partial^2_{\theta\theta} X_t^\theta
    \right)\right)=\\
    \E_x^\theta f(X_t^\theta)\Xi^2_t=\E_x^\theta f(X_t^\theta)G_t^\theta(x,X_t^\theta);
\end{multline}
see \eqref{G} for the definition of $G_t^\theta(x,y)$. Because the
test function $f$ is arbitrary, the integral identity
(\ref{rightside}) proves (\ref{d_theta_rep1}).
\end{proof}

\begin{rem} From \eqref{rightside} with $f\equiv 1$ it follows
that for every $x\in \Re, \theta\in \Theta, t>0$
$$
\E_x^\theta G_t^\theta(x, X_t^\theta)=0.
$$
\end{rem}

\begin{proof}[Proof of Remark \ref{dg_momres}]
By the moment bounds and formula \eqref{Xi_2}, we have
\be\label{Xi_mom_1} \E_x^\theta|\Xi_t^2|^{p}\leq C(1+|x|^p) \ee
for every $p\in [1, 2+\kappa)$, with the constants $C$ depending
on $t, p$ only.

Combining relations \eqref{d_theta_rep} -- \eqref{G} we get
$$
\prt_\theta g_t^\theta(x,
X_t)=\E_x^\theta\Big[\Xi_t^2\Big|X_t\Big]-g_t^\theta(x,
X_t^\theta)^2,
$$
Moreover, inequality \eqref{dg_mom} follows directly from
\eqref{Xi_mom_1}, (45) \cite{MLE} (moment bounds for $g_t^\theta$)
and Jensen's inequality.
\end{proof}

\end{multicols}

\begin{multicols}{2}

\renewcommand{\refname}{Список використаних джерел}

\renewcommand{\refname}{References}

\begin{flushright}
Received:  23.12.2013
\end{flushright}
\end{multicols}
\end{document}